
\documentclass[smallextended,nospthms]{svjour3}       
\smartqed  
\usepackage{mathptmx}      

\usepackage{amssymb,amsmath,enumerate,amsthm}

\theoremstyle{plain}
\newtheorem{theorem}{Theorem}[section]
\newtheorem{corollary}[theorem]{Corollary}

\theoremstyle{remark}

\newcommand{\CC}{{\mathbb C}}
\newcommand{\DD}{{\mathbb D}}

\newcommand{\cD}{{\mathcal D}}
\newcommand{\cH}{{\mathcal H}}
\DeclareMathOperator{\hol}{\mathrm Hol}

\journalname{Journal}

\begin{document}

\title{Linear polynomial approximation schemes in Banach holomorphic function spaces
\thanks{First author supported by a grant from NSERC. \\Second author supported by grants from NSERC and the Canada Research Chairs program.}}

\titlerunning{Linear polynomial approximation schemes}        

\author{Javad Mashreghi  \and Thomas Ransford}

\authorrunning{J. Mashreghi \and T. Ransford} 

\institute{J. Mashreghi \at
              D\'epartement de math\'ematiques et de statistique, Universit\'e Laval, Qu\'ebec (QC), G1V 0A6, Canada \\
                                 \and
           T. Ransford \at
              D\'epartement de math\'ematiques et de statistique, Universit\'e Laval, Qu\'ebec (QC), G1V 0A6, Canada\\
              Tel.: +1-418-656-2131\\
              \email{ransford@mat.ulaval.ca}   
}

\date{Received: date / Accepted: date}

\maketitle

\begin{abstract}
Let $X$ be a Banach holomorphic function space on the unit disk.
A \emph{linear polynomial approximation scheme} for  $X$ 
is a sequence of bounded linear operators $T_n:X\to X$ with the property that, for each $f\in X$, 
the functions $T_n(f)$ are polynomials converging to $f$ in the norm of the space.
We completely characterize those spaces $X$ that admit a linear polynomial approximation scheme.
In particular, we show that it is \emph{not} sufficient merely that polynomials be dense in $X$.

\keywords{Holomorphic function  \and Banach space \and Polynomial \and Bounded approximation property}
\subclass{41A10 \and 46B15 \and 46B28}
\end{abstract}


\section{Introduction}\label{S:intro}

We denote by $\DD$ the open unit disk, and by $\hol(\DD)$ the Fr\'echet space of holomorphic functions on $\DD$,
equipped with the topology of uniform convergence on compact sets. 

A \emph{Banach holomorphic function space on $\DD$} 
is a Banach space $X$ that is a subset of $\hol(\DD)$,
such that the inclusion map $X\hookrightarrow\hol(\DD)$ is continuous.

In many such spaces $X$, the polynomials are dense. 
This is most usually proved by a direct construction.
For example, if $X$ is the Hardy space $H^2$, then, given $f\in H^2$, 
we can approximate $f$ by the partial sums  $s_n(f)$ of its Taylor expansion. Clearly the $s_n(f)$ are polynomials,
and,  from the very definition of the $H^2$-norm, we have $\|s_n(f)-f\|_{H^2}\to0$ as $n\to\infty$.

However, there are spaces $X$ where this simple procedure breaks down, 
even though the polynomials are dense.
A well-known example is the disk algebra $A(\DD)$. 
There exists $f\in A(\DD)$ such that $s_n(f)$ does not converge to $f$ in the norm of $A(\DD)$,
and the sequence $\|s_n(f)\|_{A(\DD)}$ is even unbounded.
This is proved using a slight variant of the argument that establishes the theorem of du Bois-Reymond that
there exists a continuous function on the unit circle whose Fourier series diverges
(see e.g.\ \cite[Chapter~II, Theorem~2.1]{Ka76}).
In this case, there is an elegant way around the problem.
Let 
\begin{equation}\label{E:Cesaro}
\sigma_n(f):=\frac{1}{n+1}\sum_{k=0}^n s_k(f).
\end{equation}
Then $\sigma_n(f)$ is still a polynomial, and $\|\sigma_n(f)-f\|_{A(\DD)}\to0$ as $n\to\infty$.
This is essentially Fej\'er's theorem.
The same technique also works in certain other spaces $X$,
notably weighted Dirichlet spaces $\cD(\omega)$ with superharmonic weights $\omega$ (see \cite{MR19}).

However, there are  spaces where even this method fails.
Given a holomorphic function $b:\DD\to\DD$, 
we denote by $\cH(b)$ the de Branges--Rovnyak space associated to $b$.
We shall not give the precise definition of these spaces here, but remark simply that
each $\cH(b)$ is a Hilbert space continuously included in $H^2$.
It is known that polynomials are dense in $\cH(b)$ if and only if
$\int_0^{2\pi}\log(1-|b^*(e^{i\theta})|^2)\,d\theta>-\infty$
(where $b^*$ denotes the radial limit function of $b$).
It was shown in \cite{EFKMR16} that 
there exists a de Branges--Rovnyak space $\cH(b)$ in which polynomials are dense
and a function $f\in\cH(b)$ such that neither $s_n(f)$ nor $\sigma_n(f)$  converge to $f$ in the norm of $\cH(b)$
(and in fact both sequences are unbounded).

The article \cite{EFKMR16} also describes a constructive method for polynomial approximation in
those spaces $\cH(b)$ where polynomials are dense. 
However, the method depends not only on $b$ but on the function being approximated, and, as a result,
it is highly nonlinear. This leaves open the problem of whether there is a linear method that works,
for example, replacing the Ces\`aro means in \eqref{E:Cesaro} by some other summability method.
To formulate this idea more precisely, we introduce the following definition.

Given a Banach holomorphic function space $X$ on $\DD$,
a \emph{linear polynomial approximation scheme} for $X$ is a sequence
of continuous linear operators $T_n:X\to X$ such that, for each $f\in X$, the functions $T_n(f)$
are polynomials and $\|T_n(f)-f\|_X\to0$ as $n\to\infty$.

Thus, our problem becomes: 
does every de Branges--Rovnyak space $\cH(b)$ in which polynomials are dense admit a
linear polynomial approximation scheme? 
This is a very special case of a much wider question, namely,
which Banach holomorphic function spaces on $\DD$ admit a linear
polynomial approximation scheme? 
Our first theorem furnishes a complete answer to this question.

\begin{theorem}\label{T:BAP}
A Banach holomorphic function space $X$ on $\DD$
admits a linear polynomial approximation scheme
if and only if $X$ contains a dense subspace of polynomials
and has the bounded approximation property.
\end{theorem}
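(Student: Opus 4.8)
The plan is to prove the two implications separately, using the uniform boundedness principle together with the Baire category theorem for necessity, and a finite-rank perturbation argument for sufficiency.

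For the forward implication, suppose $(T_n)$ is a linear polynomial approximation scheme for $X$. The density of the polynomials is immediate: each $T_n(f)$ is a polynomial lying in $X$, and $T_n(f)\to f$ in $X$. To obtain the bounded approximation property, I would first invoke the Banach--Steinhaus theorem: since $T_n(f)\to f$ for every $f$, the sequence $\bigl(\|T_n(f)\|_X\bigr)_n$ is bounded for each $f$, and hence $C:=\sup_n\|T_n\|<\infty$. The main point is then to show that each $T_n$ actually has \emph{finite rank}. This is not obvious, because a priori the degree of $T_n(f)$ could grow with $f$, so that the range of $T_n$ lies only in a countable increasing union of finite-dimensional spaces. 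Here I would use Baire category. For each integer $m\ge0$, let $E_m$ denote the set of polynomials of degree at most $m$ that belong to $X$; this is a linear subspace of $X$ of dimension at most $m+1$, hence closed. The sets $F_m:=T_n^{-1}(E_m)$ are then closed linear subspaces of $X$, and $\bigcup_m F_m=X$ because every $T_n(f)$ is a polynomial. By the Baire category theorem some $F_m$ is nonmeager; being closed it then has nonempty interior, and a linear subspace with nonempty interior must be all of $X$. Thus $T_n(X)\subseteq E_m$, so $T_n$ has finite rank. Consequently $(T_n)$ is a uniformly bounded sequence of finite-rank operators converging strongly to the identity, which is exactly the bounded approximation property (using that $X$ is separable, being the closure of a countable-dimensional space of polynomials).

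For the reverse implication, suppose $X$ has a dense subspace of polynomials and the bounded approximation property. Since $X$ is separable, the latter furnishes a constant $C$ and a sequence of finite-rank operators $S_n:X\to X$ with $\|S_n\|\le C$ and $S_n(f)\to f$ for every $f$. The obstacle is that the ranges of the $S_n$ need not consist of polynomials, and I would remove this by perturbing each $S_n$ into an operator with polynomial range. Writing $S_n=\sum_{j=1}^{k_n}\phi_j\otimes g_j$ with $\phi_j\in X^*$ and $g_j\in X$, I would use the density of polynomials to choose, for each $j$, a polynomial $p_j\in X$ with $\|g_j-p_j\|_X$ so small that the operator $\widetilde S_n:=\sum_{j=1}^{k_n}\phi_j\otimes p_j$ satisfies $\|\widetilde S_n-S_n\|\le 1/n$. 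Then $\widetilde S_n(f)$ is a polynomial for every $f$, and for each fixed $f$ one has $\|\widetilde S_n(f)-f\|_X\le \|\widetilde S_n-S_n\|\,\|f\|_X+\|S_n(f)-f\|_X\to0$. Setting $T_n:=\widetilde S_n$ then produces the desired linear polynomial approximation scheme.

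I expect the main obstacle to be the finite-rank claim in the necessity direction. The passage from ``every value $T_n(f)$ is a polynomial'' to ``$T_n$ has finite-dimensional range'' is where the Baire category argument does the essential work; it is precisely the step where naive intuition, suggesting that a bounded operator all of whose values are polynomials could still fail to be finite-rank, turns out to be wrong for completeness reasons.
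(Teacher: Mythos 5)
Your proposal is correct and follows essentially the same route as the paper: Banach--Steinhaus plus a Baire-category argument (preimages of the closed finite-dimensional spaces of polynomials of degree at most $m$) to show each $T_n$ has finite rank for necessity, and a finite-rank perturbation argument pushing the ranges of the BAP operators into the dense space of polynomials for sufficiency. The only cosmetic difference is that you cite as standard the equivalence, for separable spaces, between BAP and the existence of a uniformly bounded sequence of finite-rank operators converging strongly to the identity, whereas the paper proves both directions of that equivalence inline via a compact-covering argument and a dense-sequence argument.
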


We recall that a Banach space $(Y,\|\cdot\|)$ has the \emph{approximation property} (AP) if,
given a compact subset $K$ of $Y$ and $\epsilon>0$, 
there is a bounded finite-rank operator $T:Y\to Y$ such that
\[
\|Ty-y\|\le \epsilon \quad(y\in K).
\]
If, in  addition, the operator $T$ can chosen so that $\|T\|\le M$, 
where $M$ is a constant independent of $K$ and $\epsilon$,
then $Y$ is said to have the \emph{bounded approximation property} (BAP).

It is well known that every Banach space with a Schauder basis has BAP.
In particular, separable Hilbert spaces have BAP. Thus we obtain the following corollary,
which answers our earlier question about the de Branges--Rovnyak spaces $\cH(b)$.

\begin{corollary}\label{C:Hilbert}
Let $H$ be a Hilbert holomorphic function space on $\DD$.
Then $H$ admits a linear polynomial approximation scheme if and only if it contains a dense
subspace of polynomials.
\end{corollary}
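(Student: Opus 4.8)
The plan is to deduce the corollary directly from Theorem \ref{T:BAP}, so that the only real work is to verify the two hypotheses of that theorem in the Hilbert-space setting.

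First I would dispose of the trivial implication. If $H$ admits a linear polynomial approximation scheme $(T_n)$, then for each $f\in H$ the functions $T_n(f)$ are polynomials lying in $H$ and $\|T_n(f)-f\|_H\to0$; hence $f$ lies in the closure of the polynomials. As $f$ was arbitrary, the polynomials are dense in $H$, and in particular $H$ contains a dense subspace of polynomials. This direction uses neither the theorem nor the Hilbert structure, only the definition of a linear polynomial approximation scheme.

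For the converse, suppose $H$ contains a dense subspace of polynomials. By Theorem \ref{T:BAP} it suffices to show that $H$ has the bounded approximation property. As already recorded in the text, every separable Hilbert space has BAP (an orthonormal basis being a Schauder basis), so it remains only to check that $H$ is \emph{separable}. This is where the hypothesis on polynomials enters: the space of all polynomials has countable algebraic dimension, so the given dense subspace of polynomials has a countable Hamel basis, and the set of finite linear combinations of that basis with coefficients in $\mathbb{Q}+i\mathbb{Q}$ is a countable set dense in the subspace, hence dense in $H$. Therefore $H$ is separable, hence has BAP, and Theorem \ref{T:BAP} then furnishes a linear polynomial approximation scheme.

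The argument is short precisely because Theorem \ref{T:BAP} carries all the analytic weight; the only point requiring any care is the separability step. I would expect the mildest subtlety to be the passage from ``a dense subspace of polynomials'' to ``a countable dense set,'' which rests simply on the fact that a subspace of the countable-dimensional vector space of all polynomials is itself countable-dimensional. No appeal to the reproducing kernel is needed here, though one could alternatively derive separability from the observation that the continuous inclusion $H\hookrightarrow\hol(\DD)$ makes $H$ a reproducing kernel Hilbert space on the separable set $\DD$ with norm-continuous kernel sections, whose span is then dense.
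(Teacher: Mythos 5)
Your proof is correct and takes essentially the same route as the paper: the corollary is deduced from Theorem~\ref{T:BAP} together with the fact that separable Hilbert spaces have BAP (an orthonormal basis being a Schauder basis). Your explicit verification that a dense subspace of polynomials forces $H$ to be separable is a detail the paper leaves implicit, but it is the same argument.
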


There exist separable Banach spaces without BAP (and even without AP).
The first such example was given by Enflo \cite{En73}.
It is now known that several naturally occurring Banach spaces fail to possess BAP,
for example, certain closed subspaces of $c_0$ and $\ell^p~(p\ne2)$
(see \cite{Da73} and \cite{Sz78}).

This raises the question as to whether there exist
Banach holomorphic function spaces on $\DD$ without BAP.
Our second theorem answers this question in the affirmative, 
by showing that any separable, infinite-dimensional, complex Banach space 
can be manifested as a Banach holomorphic function space on $\DD$ in which the polynomials are dense.
The result even permits a certain amount of control on the norm.

\begin{theorem}\label{T:model}
Let $Y$ be a separable, infinite-dimensional, complex Banach space,
and let $(\alpha_n)_{n\ge0}$ be a positive sequence such that 
$\lim_{n\to\infty}\alpha_n^{1/n}=1$.
Then there exists a Banach holomorphic function space $X$ on $\DD$ such that:
\begin{enumerate}[(i)]
\item $X$ is isometrically isomorphic to $Y$,
\item $X$ contains all functions holomorphic in a neighborhood of $\overline{\DD}$,
\item polynomials are dense in $X$,
\item $\|z^n\|_X=\alpha_n$ for all $n\ge0$.
\end{enumerate}
\end{theorem}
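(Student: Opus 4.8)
The plan is to realize $Y$ concretely as a space of power series on $\DD$, transporting the norm of $Y$ through a linear identification whose monomial images have exactly the prescribed norms. The essential structural input is a biorthogonal system in $Y$ whose vectors and functionals grow subexponentially. Specifically, I would invoke the Ovsepian--Pe\l czy\'nski theorem: every separable Banach space admits a \emph{bounded} Markushevich basis, that is, a system $(e_n,e_n^*)_{n\ge0}$ with $e_n\in Y$, $e_n^*\in Y^*$, $\langle e_m,e_n^*\rangle=\delta_{mn}$, $\overline{\operatorname{span}}\{e_n\}=Y$, the $\{e_n^*\}$ total over $Y$, and $\sup_n\|e_n\|\,\|e_n^*\|<\infty$. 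Normalizing so that $\|e_n\|=1$ (hence $1\le\|e_n^*\|\le C$ for some constant $C$), I would define $J:Y\to\hol(\DD)$ by
\[
J(y)(z):=\sum_{n=0}^\infty \frac{\langle y,e_n^*\rangle}{\alpha_n}\,z^n .
\]

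First I would check that $J$ lands in $\hol(\DD)$. The $n$th Taylor coefficient of $J(y)$ is bounded by $\|e_n^*\|\,\alpha_n^{-1}\|y\|_Y\le C\alpha_n^{-1}\|y\|_Y$, whose $n$th root tends to $1$ because $C^{1/n}\to1$ and $\alpha_n^{1/n}\to1$; thus the series has radius of convergence at least $1$ and $J(y)\in\hol(\DD)$. Next, $J$ is injective: if $J(y)=0$ then $\langle y,e_n^*\rangle=0$ for every $n$, so $y=0$ by totality. Since $J(e_m)=\alpha_m^{-1}z^m$, we have $z^m=J(\alpha_m e_m)$. I would then set $X:=J(Y)\subseteq\hol(\DD)$ and equip it with the transported norm $\|J(y)\|_X:=\|y\|_Y$, so that $J:Y\to X$ is a surjective isometry. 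This gives (i) at once, completeness of $X$ is inherited from $Y$, and $\|z^m\|_X=\|\alpha_m e_m\|_Y=\alpha_m$, which is (iv).

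The remaining requirements are then routine consequences of the same estimates. Continuity of the inclusion $X\hookrightarrow\hol(\DD)$ follows from $|J(y)(z)|\le\bigl(\sum_n\|e_n^*\|\alpha_n^{-1}|z|^n\bigr)\|y\|_Y$, the series converging and being locally bounded on $\DD$; this also confirms that $X$ is a Banach holomorphic function space. Density of the polynomials (iii) is immediate, since $J^{-1}$ carries the polynomials onto $\operatorname{span}\{e_n\}$, which is dense in $Y$, and $J$ is an isometry. For (ii), given $g(z)=\sum_n b_n z^n$ holomorphic on a neighborhood of $\overline{\DD}$, one has $|b_n|\le M r^n$ with $r<1$, so $\sum_n|b_n|\,\alpha_n<\infty$ (here again $\alpha_n^{1/n}\to1$ is used), whence $y:=\sum_n b_n\alpha_n e_n$ converges in $Y$ and $J(y)=g$ by continuity of $J$; thus $g\in X$.

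The one genuinely non-formal point—the main obstacle—is securing a biorthogonal system whose products $\|e_n\|\,\|e_n^*\|$ grow subexponentially. For an arbitrary Markushevich basis this product may grow arbitrarily fast, which would push the radius of convergence of $J(y)$ below $1$ and wreck both the inclusion $X\hookrightarrow\hol(\DD)$ and the reverse inclusion in (ii). The bounded M-basis theorem is precisely what reconciles the admissible growth of the prescribed monomial norms—constrained only by $\alpha_n^{1/n}\to1$—with $J(y)$ being holomorphic exactly on $\DD$. I expect no other serious difficulty; everything else reduces to the two elementary root-test estimates above.
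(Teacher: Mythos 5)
Your proposal is correct and follows essentially the same route as the paper: an Ovsepian--Pe\l czy\'nski bounded Markushevich basis, the coefficient map $J$ into $\hol(\DD)$ justified by root-test estimates, the transported norm on $X:=J(Y)$, and the absolute-convergence argument for functions holomorphic past $\overline{\DD}$. The only (immaterial) difference is where the scaling sits: the paper renormalizes the basis so that $\|e_n\|=\alpha_n$ and uses $J(y)=\sum e_n^*(y)z^n$, while you keep $\|e_n\|=1$ and divide the coefficients by $\alpha_n$.
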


Combining Theorems~\ref{T:BAP} and \ref{T:model}, we obtain the following corollary.

\begin{corollary}
There exists a Banach holomorphic function space $X$ on~$\DD$, 
containing the polynomials as a dense subspace, 
but not admitting any linear polynomial approximation scheme.
\end{corollary}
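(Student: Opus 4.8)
The plan is to deduce the final corollary by combining the two main theorems through a simple counting argument. The key observation is that Theorem~\ref{T:model} lets me realize, as a Banach holomorphic function space $X$ on $\DD$, \emph{any} separable, infinite-dimensional complex Banach space $Y$, with the polynomials dense in $X$. So I would first invoke the existence of a separable, infinite-dimensional complex Banach space $Y$ that fails the bounded approximation property. As noted in the excerpt just before Theorem~\ref{T:model}, Enflo's example \cite{En73} provides a separable Banach space without even the approximation property, and subsequent work (\cite{Da73}, \cite{Sz78}) exhibits such spaces among closed subspaces of $c_0$ and $\ell^p$; any one of these, taken over $\CC$ (or complexified if stated over $\RR$), serves as the required $Y$.

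Next I would apply Theorem~\ref{T:model} to this $Y$, choosing any admissible weight sequence, for instance $\alpha_n=1$ for all $n\ge0$, which trivially satisfies $\lim_{n\to\infty}\alpha_n^{1/n}=1$. The theorem then furnishes a Banach holomorphic function space $X$ on $\DD$ that is isometrically isomorphic to $Y$ and in which the polynomials are dense. Since BAP is an isometric (indeed isomorphic) invariant of Banach spaces, and $Y$ fails BAP, the space $X$ also fails BAP.

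Finally, I would appeal to Theorem~\ref{T:BAP}, which asserts that $X$ admits a linear polynomial approximation scheme if and only if $X$ contains a dense subspace of polynomials \emph{and} has BAP. By construction $X$ has the former property but not the latter, so the equivalence forces the conclusion that $X$ admits no linear polynomial approximation scheme. This $X$ is therefore a Banach holomorphic function space on $\DD$ containing the polynomials as a dense subspace yet admitting no linear polynomial approximation scheme, which is exactly the assertion of the corollary.

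I do not anticipate a genuine obstacle here, since all the substantive work is carried by Theorems~\ref{T:BAP} and \ref{T:model}, which I am entitled to assume. The only point requiring any care is the invariance of BAP under isometric isomorphism: a finite-rank operator of norm $\le M$ on $Y$ transports, via the isomorphism, to a finite-rank operator of the same norm on $X$ approximating the corresponding compact set, so the failure of BAP is preserved. Thus the proof is essentially a one-line deduction once the ingredients are in place.
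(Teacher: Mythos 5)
Your proposal is correct and follows essentially the same route as the paper: take a separable, infinite-dimensional complex Banach space without BAP, realize it as a Banach holomorphic function space with dense polynomials via Theorem~\ref{T:model}, and conclude via Theorem~\ref{T:BAP}. The extra details you supply (the choice $\alpha_n=1$ and the invariance of BAP under isometric isomorphism) are correct and merely make explicit what the paper's shorter proof leaves implicit.
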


\begin{proof}
Let $Y$ be a separable, complex Banach space without BAP, 
and let $X$ be a Banach holomorphic function space on $\DD$ 
satisfying the properties listed in Theorem~\ref{T:model}.
Then polynomials are dense in $X$, but $X$ does not have BAP and so, by Theorem~\ref{T:BAP}, 
it does not admit a linear polynomial approximation scheme.
\end{proof}


\section{Proof of Theorem~\ref{T:BAP}}\label{S:proofthm1}

We shall establish the following abstract version of Theorem~\ref{T:BAP}.
The original version follows upon taking
$Y=X$ and $Z=X\cap\{\text{polynomials}\}$.

\begin{theorem}\label{T:abstract}
Let $Y$ be a Banach space and let $Z$ be a subspace of $Y$ admitting a countable Hamel basis.
Then the following statements are equivalent.
\begin{enumerate}[(i)]
\item There exists a sequence of bounded linear maps $T_n:Y\to Z$ such that, for each $y\in Y$,
we have $\|T_ny-y\|\to0$ as $n\to\infty$.
\item The subspace $Z$ is dense in $Y$, and $Y$ has the bounded approximation property.
\end{enumerate}
\end{theorem}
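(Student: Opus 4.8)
The plan is to prove the two implications separately, with essentially all of the novelty concentrated in (i)~$\Rightarrow$~(ii). For that direction I would assume the operators $T_n\colon Y\to Z$ exist and extract three facts. Density of $Z$ in $Y$ is immediate, since each $T_ny$ lies in $Z$ and $T_ny\to y$. Next, since $(T_ny)$ converges, hence is bounded, for every $y$, the uniform boundedness principle gives $M:=\sup_n\|T_n\|<\infty$. The decisive point is that each $T_n$ is automatically of \emph{finite} rank. To see this, fix a countable Hamel basis of $Z$ and let $Z_N$ denote the span of its first $N$ elements, so that $Z=\bigcup_N Z_N$ with each $Z_N$ finite-dimensional and therefore closed in $Y$. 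The sets $T_n^{-1}(Z_N)$ are closed subspaces of $Y$ with $\bigcup_N T_n^{-1}(Z_N)=T_n^{-1}(Z)=Y$, so by the Baire category theorem some $T_n^{-1}(Z_N)$ has nonempty interior and hence equals $Y$; thus $T_n(Y)\subseteq Z_N$ is finite-dimensional. With the $T_n$ now known to be finite-rank, of norm at most $M$, and converging pointwise to the identity, the bounded approximation property follows at once: pointwise convergence together with the uniform bound forces uniform convergence on every compact set, so for a given compact $K$ and $\epsilon>0$ a single $T_n$ serves as the required finite-rank operator with $\|T_n\|\le M$.

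For (ii)~$\Rightarrow$~(i) I would first observe that $Y$ is separable, because the dense subspace $Z$ is countable-dimensional and so possesses a countable dense subset (take finite combinations of the Hamel basis with coefficients in $\mathbb{Q}+i\mathbb{Q}$). For a separable space with the BAP, a standard diagonal construction yields finite-rank operators $S_n\colon Y\to Y$ with $\sup_n\|S_n\|\le M$ and $S_ny\to y$ for all $y$: one applies the definition of the BAP to the finite sets $\{y_1,\dots,y_n\}$ drawn from a countable dense sequence, with tolerance $1/n$, and verifies pointwise convergence by a routine three-term estimate. These operators need not take values in $Z$, so the concluding step is a perturbation. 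Writing $S_ny=\sum_j\phi_j(y)w_j$ with $\phi_j\in Y^*$ and finitely many $w_j\in Y$, I would invoke the density of $Z$ to replace each $w_j$ by a nearby $w_j'\in Z$, producing the finite-rank operator $T_ny=\sum_j\phi_j(y)w_j'$ whose range lies in $Z$ and which satisfies $\|T_n-S_n\|\le 1/n$. Then $T_n\colon Y\to Z$ is bounded and $T_ny\to y$, as required.

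The main obstacle will be the finite-rank conclusion in (i)~$\Rightarrow$~(ii); the remaining ingredients are only the uniform boundedness principle, the definition of the BAP, and an elementary perturbation. The pivotal observation is that a bounded linear operator whose range lies in a countable-dimensional subspace is necessarily of finite rank, and it is precisely here that the hypothesis of a countable Hamel basis for $Z$ is used. Everything else is bookkeeping around this Baire-category argument.
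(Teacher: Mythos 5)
Your proposal is correct and follows essentially the same route as the paper's own proof: the uniform boundedness principle plus the Baire-category argument showing that a bounded operator into a countable-dimensional subspace has finite rank, the compactness/covering argument for BAP, and in the converse direction the diagonal construction over a dense sequence combined with a finite-rank perturbation to push ranges into $Z$. The only cosmetic difference is that you perturb the operators after the diagonal selection while the paper perturbs before, which changes nothing of substance.
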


\begin{proof}
(i) $\Rightarrow$ (ii):
Suppose that (i) holds. Then, obviously, $Z$ is dense in $Y$. 
To prove that (ii) holds, it remains to show that $Y$ has BAP.

Let $T_n:Y\to Z$ be  bounded linear maps such that $\|T_ny-y\|\to0$ for each $y\in Y$.
Then $M:=\sup_n\|T_n\|<\infty$, by the Banach--Steinhaus theorem.
Also, we claim  that each $T_n$ is of finite rank.
Indeed, since $Z$ has a countable Hamel basis, we can write $Z=\cup_{k\ge1}F_k$,
where the $F_k$ are finite-dimensional subspaces of $Y$, hence closed in $Y$.
Then, for each $n$, we have $Y=T_n^{-1}(Z)=\cup_{k\ge1}T_n^{-1}(F_k)$, a countable union of closed sets,
so, by the Baire category theorem, 
at least one of the sets $T_n^{-1}(F_k)$ has non-empty interior, $T_n^{-1}(F_{k_0})$ say.
As $T_n^{-1}(F_{k_0})$ is a subspace of $Y$,
this forces $T_n^{-1}(F_{k_0})=Y$, 
which in turn implies that $T_n(Y)\subset F_{k_0}$.
Thus $T_n$ is of finite rank, as claimed.

Now let $K$ be a compact subset of $Y$ and let $\epsilon>0$.
Cover $K$ by a finite set of closed balls $B(y_j,\epsilon/(M+1))$.
Choose $N$ large enough so that $\|T_N(y_j)-y_j\|\le\epsilon$ for each $j$.
Given $y\in K$, we may find  $j$ so that $y\in B(y_j,\epsilon/(M+1))$, and then
\begin{align*}
\|T_N(y)-y\|
&\le \|T_N(y-y_j)\|+\|T_N(y_j)-y_j\|+\|y_j-y\|\\
&\le M\frac{\epsilon}{M+1}+\epsilon+\frac{\epsilon}{M+1}=2 \epsilon.
\end{align*}
Combined with the fact that $\|T_N\|\le M$, this shows that $Y$ has BAP.

\smallskip

(ii) $\Rightarrow$ (i):
Assume that (ii) holds. As $Y$ has BAP,
there exists a constant $M$ such that,
for each compact subset $K$ of $Y$ and $\epsilon>0$, 
we may find a finite-rank operator $T:Y\to Y$ satisfying
\begin{equation}\label{E:AP}
\|T\|\le M\quad\text{and}\quad \|Ty -y\|\le \epsilon \quad(y\in K).
\end{equation}
We claim that $T$ may be chosen so that, in addition, $T(Y)\subset Z$.
Indeed, as $T$ is finite-rank and $Z$ is dense in $Y$, 
we can find a finite-rank operator $T'$ such that 
$T'(Y)\subset Z$ and $\|T-T'\|$ is as small as we please.
Replacing $T$ by $T'$ and adjusting the constants appropriately,
the claim is justified.

The hypotheses in (ii) certainly imply that $Y$ is separable.
Let $(y_n)_{n\ge1}$ be a dense sequence in $Y$.
By we what have just proved, applied with $K:=\{y_1,\dots,y_n\}$,
for each $n\ge1$ there exists an operator $T_n:Y\to Z$ such that
\[
\|T_n\|\le M \quad\text{and}\quad \|T_n(y_j)-y_j\|\le 1/n \quad(j=1,\dots,n).
\]
We claim that $\lim_{n\to\infty}\|T_n(y)-y\|=0$ for each $y\in Y$. 
Indeed, given $\epsilon>0$, choose $y_{n_0}$ so that $\|y-y_{n_0}\|<\epsilon/(M+1)$.
Then, for all $n\ge n_0$, we have
\begin{align*}
\|T_n(y)-y\|
&\le \|T_n(y-y_{n_0})\|+\|T_n(y_{n_0})-y_{n_0}\|+\|y_{n_0}-y\|\\
&\le M\frac{\epsilon}{M+1}+\frac{1}{n}+\frac{\epsilon}{M+1}=\epsilon+\frac{1}{n}.
\end{align*}
Thus $\|T_n(y)-y\|\le 2\epsilon$ for all $n$ large enough, thereby establishing the claim.
This shows that (i) holds, and completes the proof of the theorem.
\end{proof}


\section{Proof of Theorem~\ref{T:model}}\label{S:proofthm2}

A separable Banach space that fails to have the bounded approximation property 
cannot have a Schauder basis.
However, it may admit certain weaker types of basis. 
Our proof of Theorem~\ref{T:model} relies on one notion of this kind.

Let $(Y,\|\cdot\|)$ be a Banach space and denote by $Y^*$ its dual.
A \emph{biorthogonal system} for $Y$ is a  sequence $(e_n,e_n^*)$ in $Y\times Y^*$
such that
\[
e_j^*(e_k)=
\begin{cases}
1, &j=k,\\ 0, &j\ne k.
\end{cases}
\]
It is called \emph{fundamental} if the sequence $(e_n)$ spans a dense subspace of $Y$,
and \emph{total} if the sequence $(e^*_n)$ spans a weak*-dense subspace of $Y^*$.
A biorthogonal system that is both fundamental and total is called a \emph{Markushevich basis} for $Y$. 

The first part of the following result is due to Markushevich \cite{Ma43}, 
and the second part is due to Ovsepian and Pe\l czy\'nski \cite{OP75}.
A proof can also be found in  \cite[Proposition~1f3 and Theorem~1f4]{LT77}.

\begin{theorem}\label{T:Markushevich}
Every separable, infinite-dimensional Banach space admits a Markushevich basis $(e_n,e^*_n)$.
This basis may be chosen so that also $\sup_n\|e_n\|\|e^*_n\|<\infty$.
\end{theorem}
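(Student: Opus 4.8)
The plan is to prove the two assertions separately, following the classical route. Before anything else, I would extract from the separability of $Y$ two countable objects: a sequence $(y_n)$ that is dense in $Y$, and a sequence $(f_n)$ in $Y^*$ that is \emph{total}, meaning that $f_n(y)=0$ for all $n$ forces $y=0$. The first is immediate. For the second, I would use that the closed unit ball of $Y^*$ is weak*-compact by Banach--Alaoglu and weak*-metrizable because $Y$ is separable; hence it is weak*-separable, and a countable weak*-dense subset of the ball is automatically total. With $(y_n)$ destined to guarantee fundamentality and $(f_n)$ to guarantee totality, the task reduces to biorthogonalizing these two sequences simultaneously.

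For the existence of a Markushevich basis (the first assertion), I would run a two-sided Gram--Schmidt procedure that interlaces two elementary moves. Suppose $e_1,\dots,e_m$ and $e_1^*,\dots,e_m^*$ have been constructed biorthogonally. To \emph{absorb a vector} $y$, set $e_{m+1}:=y-\sum_{k\le m}e_k^*(y)e_k$; then $e_j^*(e_{m+1})=0$ for $j\le m$, and if $e_{m+1}\ne0$ it is independent of $e_1,\dots,e_m$, so Hahn--Banach supplies a functional $e_{m+1}^*$ with $e_{m+1}^*(e_k)=\delta_{m+1,k}$ for $k\le m+1$; if $e_{m+1}=0$ we simply skip $y$. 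Dually, to \emph{absorb a functional} $f$, set $e_{m+1}^*:=f-\sum_{k\le m}f(e_k)e_k^*$ and, when this is nonzero, normalize a suitable vector to produce a matching $e_{m+1}$. Alternating these moves so that every $y_n$ is eventually absorbed and every $f_n$ is eventually absorbed, one obtains a biorthogonal system whose vectors span a dense subspace (their span contains all $y_n$) and whose functionals span a weak*-dense subspace (their span contains the total family $(f_n)$, hence separates the points of $Y$). That is precisely a Markushevich basis.

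The genuinely harder part is the second assertion: arranging that $\sup_n\|e_n\|\,\|e_n^*\|<\infty$. The naive interlacing above gives no control whatsoever over these products. The extra ingredient I would bring in is the \emph{Auerbach basis lemma}: every finite-dimensional normed space admits a basis $(x_i)$ with biorthogonal functionals $(x_i^*)$ satisfying $\|x_i\|=\|x_i^*\|=1$. The strategy is then to carry out the construction in finite blocks rather than one vector at a time: build an increasing chain of finite-dimensional subspaces with dense union and, within each successive block, choose an Auerbach system, modified so as to remain biorthogonal to everything already constructed. I expect the main obstacle to lie exactly here, in reconciling the \emph{global} biorthogonality relations, which couple the $e_k$ and the $e_k^*$ across all indices, with the \emph{local} norm normalization furnished by the Auerbach bases on each block. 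Making the two compatible while keeping $\|e_n\|\,\|e_n^*\|$ bounded by an absolute constant is the crux of the Ovsepian--Pe\l czy\'nski refinement, and it is where the argument becomes substantially more delicate than the bare existence proof.
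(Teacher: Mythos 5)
Your interlacing construction does correctly prove the \emph{first} assertion: alternating the two absorption moves over a dense sequence $(y_n)$ and a total sequence $(f_n)$ yields a Markushevich basis, and the steps you leave implicit are easily filled (a nonzero $e_{m+1}$ is automatically independent of $e_1,\dots,e_m$, since applying the $e_j^*$ to any linear relation kills all coefficients; and when absorbing a functional, the nonzero $e_{m+1}^*$ cannot vanish on $\bigcap_{k\le m}\ker e_k^*$, else it would be a linear combination of $e_1^*,\dots,e_m^*$ and hence zero by biorthogonality, so a suitable $e_{m+1}$ exists in that intersection). Note, however, that the paper does not prove this theorem at all: it quotes it, attributing the first assertion to Markushevich and the second to Ovsepian and Pe\l czy\'nski, and refers the reader to Lindenstrauss--Tzafriri for proofs. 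So the relevant comparison is between your proposal and the classical proofs.

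The genuine gap is in the \emph{second} assertion, and you concede it yourself: you name Auerbach's lemma and a block strategy, but then state that reconciling global biorthogonality with the local normalization ``is the crux'' and stop there. A proposal that halts at the crux has not proved the statement. Moreover, the classical Ovsepian--Pe\l czy\'nski argument does not run on Auerbach bases; its key device is an averaging trick. One starts from an arbitrary Markushevich basis (your first part), groups it into finite blocks together with auxiliary vectors whose norms and functional norms are already under control, and applies within each block of size $n$ an orthogonal matrix all of whose entries have modulus $n^{-1/2}$ (a normalized Walsh/Hadamard-type matrix) to the vectors, and the same matrix to the functionals. Orthogonality of the matrix preserves both the biorthogonality relations and the linear spans, so the result is still a Markushevich basis, while the uniform smallness of the entries ``flattens'' the norms: a single bad product $\|e_k\|\|e_k^*\|$ in a block gets redistributed over the whole block, and choosing the block sizes appropriately makes $\sup_n\|e_n\|\|e_n^*\|$ bounded by a universal constant. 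That averaging device---not Auerbach's lemma---is what resolves exactly the global/local tension you identified; without it (or, as the paper itself does, a citation to Ovsepian--Pe\l czy\'nski or to Lindenstrauss--Tzafriri, Theorem~1.f.4), the second half of the statement remains unproved.
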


\begin{proof}[Proof of Theorem~\ref{T:model}]
Given $Y$ as in the statement of the theorem, 
let $(e_n,e^*_n)_{n\ge0}$ be a Markushevich basis for $Y$
with $M:=\sup_{n\ge0}\|e_n\|\|e^*_n\|<\infty$. 
Multiplying the $e_n$ by appropriate positive constants, 
and dividing the $e_n^*$ by the same constants,
we may further suppose that $\|e_n\|=\alpha_n$ for all $n\ge0$, 
whence  $\|e_n^*\|\le M/\alpha_n$ for all $n$.

For $y\in Y$, define
\[
(Jy)(z):=\sum_{n\ge0} e_n^*(y)z^n \quad(z\in \DD).
\]
Note that
\[
\limsup_{n\to\infty}|e_n^*(y)|^{1/n}
\le \limsup_{n\to\infty}\Bigl(\frac{M\|y\|}{\alpha_n}\Bigr)^{1/n}
=\lim_{n\to\infty}\alpha_n^{-1/n}=1,
\]
so the power series defining $Jy$ has radius of convergence at least~$1$,
in other words, $Jy\in\hol(\DD)$. Further,
given $r\in(0,1)$, we have
\[
\sup_{|z|\le r}|(Jy)(z)|
\le \sum_{n\ge0}|e_n^*(y)|r^n\le \|y\|\sum_{n\ge0}Mr^n/\alpha_n=C_r\|y\|,
\]
where $C_r:=\sum_{n\ge0}Mr^n/\alpha_n<\infty$, by the $n$-th root test.
Lastly, if $Jy=0$, then $e_n^*(y)=0$ for all $n$, which implies that $y=0$, 
since the sequence $(e_n^*)$ is total. 
To summarize: we have shown that $J:Y\to\hol(\DD)$ is a continuous, linear injection.

Define $X:=J(Y)$, with the norm inherited from $Y$. 
Clearly $X$ is a Banach holomorphic function space on $\DD$ that is isometrically isomorphic to~$Y$.
Also, since the sequence $(e_n)$ is fundamental in $Y$ and $J$ maps $e_n$ to $z^n$, 
it follows that the polynomials form a dense subspace of $X$.
By construction, we have $\|z^n\|_X=\|e_n\|=\alpha_n$ for all $n\ge0$.
Finally, if $f$ is a function holomorphic on a neighbourhood of $\overline{\DD}$,
then its Taylor series has radius of convergence $R>1$. Together with the fact that
$\lim_{n\to\infty}\|z^n\|_X^{1/n}=\lim_{n\to\infty}\alpha_n^{1/n}=1$,
this implies that the Taylor series of $f$ converges absolutely in the norm of $X$, whence $f\in X$.
Thus $X$ has all the properties (i)--(iv) listed in the statement of the theorem.
\end{proof}


\section{Concluding remarks}\label{S:conclusion}

(1) The proof of Theorem~\ref{T:BAP} shows that, 
if $T_n:X\to X$ is a linear polynomial approximation scheme for $X$,
then $T_n(f)$ is a polynomial of degree at most $N(n)$, 
where $N(n)$ does not depend on $f$. 
By repeating some terms, if necessary, 
we can even arrange that $\deg(T_n(f))\le n$ for all $n$ and for all $f\in X$.
The examples $T_n(f):=s_n(f)$ and $T_n(f):=\sigma_n(f)$ mentioned in the introduction 
already satisfy this inequality.

(2) Although our primary interest in this article has been in subspaces of $\hol(\DD)$,
the techniques developed are easily adapted to function spaces on other domains,
for example, balls and polydisks in $\CC^n$. Also, instead of considering approximation by
polynomials, we can apply our techniques to other approximating subspaces. An interesting example
is when $X\subset\hol(\CC^+)$ (where $\CC^+$ denotes the upper half-plane)
and we approximate by linear combinations of  $\phi_n(z):=(z-i)^n/(z+i)^{n+1}~(n\ge0)$.

(3) Although Theorem~\ref{T:BAP} completely settles the question of the existence
of linear polynomial approximation schemes $T_n:X \to X$, 
and even implicitly furnishes a method for constructing them,
it still leaves open the problem of finding a workable formula for $T_n$.
Finding such a formula will surely depend on the space $X$.

An interesting test case is that of the de Branges--Rovnyak spaces $\cH(b)$.
Given  a space $\cH(b)$ in which the polynomials are dense,
can we find an explicit triangular array of complex numbers 
$(a_{nk})_{0\le k\le n<\infty}$ such that
\[
T_n(f):=\sum_{k=0}^n a_{nk}s_k(f)
\]
defines a linear polynomial approximation scheme for $\cH(b)$?
Can we even find such an array that works simultaneously
for all such spaces $\cH(b)$?

\bibliographystyle{spmpsci}      
\bibliography{biblist}

\end{document}